\newtheorem{theorem}{Theorem}[section]
\newtheorem{corollary}[theorem]{Corollary}
\newtheorem{lemma}[theorem]{Lemma}
\theoremstyle{definition}
\newtheorem{definition}[theorem]{Definition}
\newtheorem{notation}[theorem]{Notation}
\theoremstyle{remark}
\let\c@equation\c@theorem
\numberwithin{equation}{section}
\title{SOME REMARKS ON QUASINEARLY SUBHARMONIC FUNCTIONS}
\author{MANSOUR KALANTAR}
 \email{kalantarm@smccd.edu} 
\date{}
\begin{document}

\begin{abstract}
We prove some basic properties of quasi-nearly subharmonic functions and quasi-nearly subharmonic functions in the narrow sense.
\end{abstract}

\maketitle

\section{NOTATION, DEFINITIONS AND PRELIMINARIES} 
\begin{notation}
In what follows $ D $ is a domain of $ \mathbb{R}^{N} $ $ (N\geq2) $. The ball of center $ x\in D $ and radius $ r>0 $ is noted $ B(x,r) $. We write $ \nu_{N} $ for the volume of the unit ball, and $ \lambda $ designates the $ N- $dimensional Lebesgue measure.
\end{notation}

\begin{definition}

A function $ u:D\rightarrow [-\infty,+\infty) $ is called nearly subharmonic, if $ u $ is (Lebesgue) measurable and satisfies the mean value inequality; i.e, for all ball $ B(x,r) $ relatively compact in $ D $, 
$$ u(x)\leq \frac{1}{\nu_{N}r^{N}}\int_{B}(x,r)u(\xi)d\lambda(\xi). $$
\end{definition}
This is a generalization of subharmonic functions, in the sense of, and  given by J. Riihentaus that differs slightly from the standard definition of nearly subharmonic functions (see \cite{Rii2} and the references therein). 
\begin{theorem}
 A function $ u:D\rightarrow [-\infty,+\infty) $ is nearly subharmonic if and only if there exists a subharmonic function  that is equal to $ u $ almost everywhere in $ D $. Further, if such a function exists, it is unique and is given by the upper-semi regularization of $ u $:
 $$ u^{*}(x)=\limsup_{\zeta\rightarrow x}u(x). $$ 
\end{theorem}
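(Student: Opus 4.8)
The plan is to prove the substantive implication (nearly subharmonic $\Rightarrow$ a.e.\ equal to a subharmonic function) by a mollification argument, to read off uniqueness and the identification with $u^{*}$ from general facts about subharmonic functions, and to treat the converse as the easy direction. Throughout I write $M(u;x,r)=\frac{1}{\nu_{N}r^{N}}\int_{B(x,r)}u\,d\lambda$ for the solid mean and assume (as is implicit in the definition, once the trivial case $u\equiv-\infty$ is discarded) that $u\in L^{1}_{\mathrm{loc}}(D)$, so that $M(u;x,r)$ is finite and continuous in the center $x$.

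First I would mollify. Fix a radial, radially non-increasing standard mollifier $\varphi_{\varepsilon}$ supported in $B(0,\varepsilon)$ and set $u_{\varepsilon}=u*\varphi_{\varepsilon}$ on $D_{\varepsilon}=\{x:\overline{B(x,\varepsilon)}\subset D\}$. Writing the radial profile as the integral of its negative derivative and using Fubini, one rewrites $u_{\varepsilon}(x)$ as a convex average of the solid means $M(u;x,s)$, $0<s<\varepsilon$; since the mean value inequality gives $M(u;x,s)\ge u(x)$, this yields at once $u_{\varepsilon}\ge u$ pointwise, and the same Fubini computation applied to $M(u_{\varepsilon};x,r)$ shows that each $u_{\varepsilon}$ again satisfies the solid mean value inequality. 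As $u_{\varepsilon}$ is smooth, it is therefore subharmonic on $D_{\varepsilon}$.

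The heart of the matter is to extract a pointwise limit that is subharmonic and agrees with $u$ almost everywhere. I would use that $u_{\varepsilon}\to u$ in $L^{1}_{\mathrm{loc}}$ together with the classical monotonicity of the solid means of the subharmonic functions $u_{\varepsilon}$: for each fixed $x$ the map $r\mapsto M(u_{\varepsilon};x,r)$ is non-decreasing, and letting $\varepsilon\to0$ (the means converging by $L^{1}_{\mathrm{loc}}$-convergence) transfers this monotonicity to $u$, so that $r\mapsto M(u;x,r)$ is non-decreasing for every $x$. Consequently $v(x):=\lim_{r\to0}M(u;x,r)=\inf_{r>0}M(u;x,r)$ exists for all $x$; it is upper semicontinuous as an infimum of the continuous functions $x\mapsto M(u;x,r)$, it satisfies $v\ge u$ by the mean value inequality, it inherits the mean value inequality, and by Lebesgue differentiation $v=u$ at almost every point, so $v$ is subharmonic and equals $u$ a.e. Finally I would identify $v$ with $u^{*}$: being an upper semicontinuous majorant of $u$, $v$ dominates the least such majorant $u^{*}$, while the fact that a subharmonic function equals the $\limsup$ of its values taken along the complement of any null set gives $v\le u^{*}$; hence $u^{*}=v$ is the required subharmonic function.

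Uniqueness is immediate from $w(x)=\lim_{r\to0}M(w;x,r)$ for subharmonic $w$, so two subharmonic functions agreeing a.e.\ coincide everywhere; thus the subharmonic representative is unique and equals $u^{*}$. For the converse, a subharmonic function is measurable and obeys the mean value inequality, hence is itself nearly subharmonic, which, together with the forward direction and uniqueness, yields the stated equivalence (read, as is standard for this circle of results, at the level of the almost-everywhere class of $u$, since a single bad point value can destroy the pointwise inequality while leaving the class unchanged). The main obstacle is the middle step: one cannot take a naive limit of the $u_{\varepsilon}$ and expect subharmonicity, so the argument must route through the monotonicity of the means (equivalently, the monotone convergence of the mollifications to $u^{*}$), which is exactly where the mollification is leveraged; the attendant bookkeeping keeping $M(u;x,r)$ finite and continuous is the only other delicate point.
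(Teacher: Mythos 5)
The first thing to note is that the paper contains no proof of this statement at all: Theorem 1.3 is quoted from Herv\'{e} \cite[pg 14]{Herv} and used as a known tool, so there is no internal argument to compare yours against; your proof stands or falls on its own. On its own terms, the substantive direction is correct and follows a classical route: writing $u_{\varepsilon}=u*\varphi_{\varepsilon}$ as a convex average of the solid means $M(u;x,s)$ does give $u_{\varepsilon}\geq u$ and the sub-mean-value inequality for $u_{\varepsilon}$; monotonicity of $r\mapsto M(u_{\varepsilon};x,r)$ passes to $r\mapsto M(u;x,r)$ by $L^{1}_{\mathrm{loc}}$-convergence; and the limit $v(x)=\inf_{r}M(u;x,r)$ is upper semicontinuous, satisfies the mean value inequality (via $v(x)\leq M(u;x,r)\leq M(v;x,r)$), equals $u$ a.e.\ by Lebesgue differentiation, and is identified with $u^{*}$ by exactly the two inequalities you give. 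The uniqueness argument is also fine.

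There are, however, two genuine soft spots. First, $u\in L^{1}_{\mathrm{loc}}$ is not ``implicit in the definition'': what the definition implicitly needs is only $u^{+}\in L^{1}_{\mathrm{loc}}$ (this is how Riihentaus states it), and local integrability of $u$ itself---which your argument uses from the first line, both to define $u_{\varepsilon}$ and to have $u_{\varepsilon}\to u$ in $L^{1}_{\mathrm{loc}}$---requires the standard connectedness argument (the set of points possessing a neighborhood on which $u$ is integrable is open, closed in $D$, and nonempty once $u\not\equiv-\infty$), or else an appeal to Theorem 1.6(i) of the paper with $K=1$. Second, and more seriously, the converse direction cannot be rescued by reading the statement ``at the level of almost-everywhere classes'': as literally stated it is false, and your parenthetical concedes this rather than fixes it. Take $w\equiv 0$ and $u=0$ except $u(0)=1$; then $u=w$ a.e.\ and $w$ is subharmonic, but $u(0)=1>0=M(u;0,r)$ for every $r$, so $u$ is not nearly subharmonic. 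The correct classical statement (the one Herv\'{e} and Riihentaus actually prove) adds the hypothesis $u\leq w$ everywhere, under which the converse is a one-line computation, $u(x)\leq w(x)\leq M(w;x,r)=M(u;x,r)$; and note that your own forward construction delivers exactly this extra property, since $v=u^{*}\geq u$ pointwise. You should state that corrected equivalence explicitly (and also exclude $u\equiv-\infty$, which is nearly subharmonic under the paper's definition but admits no subharmonic representative), rather than leave the mismatch to a parenthetical remark.
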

See \cite[pg 14]{Herv}

\begin{definition}
A function $ u:D\rightarrow [-\infty,+\infty) $ is called $ K- $quasi-nearly subharmonic, if $ u $ is (Lebesgue) mearsurable, its positive part $ u^{+} $ is locally integrable and there exists a constant $ K=K(N,u,D)\geq1 $ such that for all ball $ B(x,r) $ relatively compact in $ D $,
$$ u_{M}(x) \leq \frac{K}{\nu_{N}r^{N}}\int_{B(x,r)}u_{M}(\xi)d\lambda(\xi),$$
for all $ M\geq0. $ Here,  $ u_{M}:=\max\lbrace u,-M\rbrace+M. $
\end{definition}
This and the following definition are generalizations of subharmonic function  given by  J. Riihentaus (see \cite{Rii2}).

\begin{definition}
A function $ u:D\rightarrow [-\infty,+\infty) $ is called $ K- $quasi-nearly subharmonic n.s. (in the narrow sense), if $ u $ is (Lebesgue) mearsurable, its positive part $ u^{+} $ is locally integrable and there exists a constant $ K=K(N,u,D)\geq1 $ such that for all ball $ B(x,r) $ relatively compact in $ D $,
$$ u(x) \leq \frac{K}{\nu_{N}r^{N}}\int_{B(x,r)}u(\xi)d\lambda(\xi).$$

\end{definition}

\begin{theorem}[Riihentaus] 
 Let   $u$ be a $K$-quasi-nearly subharmonic function n.s. on a domain $D$ of 
$\mathbb{R^{N}}$  $(N\geq2)$.
\begin{itemize}
 \item[(i)] \textit{ If $u\not\equiv-\infty$, then $u$ is finite almost everywhere  and is locally integrable on $D$;}
 \item[(ii)]\textit{ The function $u$ is locally bounded above on $D$.}
\end{itemize}
\end{theorem}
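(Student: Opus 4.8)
The plan is to treat the two statements separately, with (ii) being the more elementary one. For (ii), I would fix $x_0 \in D$ and choose $R>0$ with $\overline{B(x_0,2R)}$ a compact subset of $D$. For every $x \in B(x_0,R)$ the ball $B(x,R)$ is then relatively compact in $D$ and contained in $B(x_0,2R)$, so the defining inequality together with $u \le u^{+}$ gives
$$ u(x) \le \frac{K}{\nu_N R^{N}} \int_{B(x,R)} u\, d\lambda \le \frac{K}{\nu_N R^{N}} \int_{B(x,R)} u^{+}\, d\lambda \le \frac{K}{\nu_N R^{N}} \int_{B(x_0,2R)} u^{+}\, d\lambda. $$
The last quantity is finite, since $u^{+}$ is locally integrable, and it does not depend on $x$; hence $u$ is bounded above on $B(x_0,R)$ by this constant. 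I note that this uses only the local integrability of $u^{+}$ and remains valid even when $\int_{B(x,R)} u\,d\lambda = -\infty$.

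For (i), the key local observation is that if $u(a) > -\infty$ for some $a\in D$, then $u$ is integrable on every ball $B(a,\rho)$ relatively compact in $D$: applying the mean value inequality at $a$ yields $\frac{\nu_N \rho^N}{K}\,u(a) \le \int_{B(a,\rho)} u\,d\lambda$, whence $\int_{B(a,\rho)} u^- = \int_{B(a,\rho)} u^+ - \int_{B(a,\rho)} u \le \int_{B(a,\rho)}u^+ - \frac{\nu_N\rho^N}{K}\,u(a) < \infty$, again by local integrability of $u^+$. I would then run a connectedness argument on the set $G$ of points of $D$ possessing a neighborhood on which $u$ is integrable. The set $G$ is open by definition, and it is nonempty: since $u \not\equiv -\infty$ there is a point $a$ with $u(a) > -\infty$, and the observation above places $a \in G$.

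The main work is to show $G$ is relatively closed in $D$; since $D$ is connected this forces $G = D$, and $u \in L^{1}_{\mathrm{loc}}(D)$ gives both local integrability and (because a locally integrable function is finite a.e.) finiteness almost everywhere. To prove closedness, take $x_0 \in D \cap \overline{G}$ and set $d = \mathrm{dist}(x_0, \partial D)$. Choosing a nearby $w \in G$, the integrability of $u$ near $w$ forces $u > -\infty$ a.e. there, so I can select a point $z$ with $u(z) > -\infty$ and $|z - x_0|$ as small as desired. Applying the local observation at $z$, the function $u$ is integrable on $B(z,\rho)$ for every $\rho < \mathrm{dist}(z,\partial D)$; a routine choice of radii — taking $z$ close enough to $x_0$ that $\mathrm{dist}(z,\partial D)$ stays near $d$ and $B(z,\rho)$ engulfs a fixed ball about $x_0$ while remaining relatively compact in $D$ — shows that $u$ is integrable on a neighborhood of $x_0$, i.e. $x_0 \in G$.

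The step I expect to be the real obstacle is precisely this closedness argument: verifying that one can simultaneously keep the auxiliary center $z$ inside $G$, pick it where $u$ is finite, and choose $\rho$ so that $B(z,\rho)$ stays relatively compact in $D$ yet contains a full neighborhood of $x_0$. Everything else reduces to the single local integrability computation and the elementary upper bound used for (ii).
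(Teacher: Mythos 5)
Your proposal is correct, but there is nothing in the paper to compare it against: the paper states this theorem as a quoted result of Riihentaus and simply points to \cite{Rii2}, Proposition 1, giving no proof of its own. So your argument stands on its own merits, and it is sound — indeed it is essentially the classical argument used for subharmonic functions, adapted to the extra constant $K$. Part (ii) is exactly the right elementary bound: for $x\in B(x_0,R)$ the chain $u(x)\le \frac{K}{\nu_N R^N}\int_{B(x,R)}u\,d\lambda\le \frac{K}{\nu_N R^N}\int_{B(x_0,2R)}u^{+}\,d\lambda$ uses only $u\le u^{+}$, positivity of $u^{+}$, and local integrability of $u^{+}$, and your remark that it survives when $\int_{B(x,R)}u\,d\lambda=-\infty$ is the correct thing to check, since that integral is a priori only well defined in $[-\infty,+\infty)$. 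For part (i), your key local observation is the whole mechanism: finiteness of $u$ at a single point $a$, fed into the mean-value inequality, bounds $\int_{B(a,\rho)}u\,d\lambda$ from below, and together with $\int_{B(a,\rho)}u^{+}\,d\lambda<\infty$ this forces $\int_{B(a,\rho)}u^{-}\,d\lambda<\infty$. The open-closed argument is also handled correctly, including the one genuinely delicate point, which you flagged yourself: for $x_0\in D\cap\overline{G}$ one must find $z$ with $u(z)>-\infty$ arbitrarily close to $x_0$ (available because integrability near a point of $G$ gives finiteness a.e. there) and then a radius $\rho$ keeping $B(z,\rho)$ relatively compact in $D$ while containing a fixed neighborhood of $x_0$; a quantitative choice such as $|z-x_0|<d/4$ and $\rho=d/2$ settles it. Connectedness then gives $G=D$, hence $u\in L^{1}_{\mathrm{loc}}(D)$, and finiteness almost everywhere follows since a locally integrable function is finite a.e. In short: correct, complete, and very likely the same route Riihentaus takes in the cited source, but the paper itself contains no proof to match it against.
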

See \cite[Proposition 1]{Rii2}

 \section{Main Results and their proofs}
 
 \begin{theorem}
If $u$ is $K$-quasi nearly subharmonic, then so is  $u^{*}$.
\end{theorem}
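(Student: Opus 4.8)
The plan is to reduce everything to the family of truncations $u_M$ and to exploit the way the upper-semicontinuous regularization interacts with these truncations. I would first record the two elementary facts that drive the argument: $u^{*}$ is upper semicontinuous, and $u^{*}\geq u$ pointwise, since $u^{*}(x)=\limsup_{\zeta\rightarrow x}u(\zeta)$ is computed over neighborhoods of $x$ containing $x$ itself.

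The crucial reduction is the identity $(u_{M})^{*}=(u^{*})_{M}$ for every $M\geq0$. Writing $u_{M}=\phi(u)$ with $\phi(t)=\max\{t,-M\}+M$, the map $\phi$ is continuous and nondecreasing on $[-\infty,+\infty)$, and such maps commute with $\limsup$; hence $(u_{M})^{*}(x)=\limsup_{\zeta\rightarrow x}\phi(u(\zeta))=\phi\bigl(\limsup_{\zeta\rightarrow x}u(\zeta)\bigr)=\phi(u^{*}(x))=(u^{*})_{M}(x)$. This lets me phrase the whole claim in terms of a single nonnegative function at a time: setting $v:=u_{M}\geq0$, it suffices to show that the sub-mean-value inequality satisfied by $v$ is inherited by $v^{*}$ with the same constant $K$.

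To transfer the inequality I would fix a relatively compact ball $B(x,r)$, choose $\epsilon>0$ with $\overline{B(x,r+\epsilon)}\subset D$, and argue in two steps. First, for $|y-x|<\epsilon$ the ball $B(y,r)$ is still relatively compact, so $v(y)\leq \frac{K}{\nu_{N}r^{N}}\int_{B(y,r)}v\,d\lambda$ holds; and the map $y\mapsto\int_{B(y,r)}v\,d\lambda$ is continuous at $x$ by dominated convergence, using the dominating function $v\cdot\mathbf{1}_{B(x,r+\epsilon)}$, which is integrable because $v\leq u^{+}+M$ is locally integrable, together with the fact that the sphere $\{|\xi-x|=r\}$ is $\lambda$-null. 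Taking $\limsup_{y\rightarrow x}$ then yields $v^{*}(x)\leq \frac{K}{\nu_{N}r^{N}}\int_{B(x,r)}v\,d\lambda$. Second, $v^{*}\geq v$ gives $\int_{B(x,r)}v\,d\lambda\leq\int_{B(x,r)}v^{*}\,d\lambda$, and combining the two inequalities produces $v^{*}(x)\leq \frac{K}{\nu_{N}r^{N}}\int_{B(x,r)}v^{*}\,d\lambda$, which by the identity is precisely $(u^{*})_{M}(x)\leq \frac{K}{\nu_{N}r^{N}}\int_{B(x,r)}(u^{*})_{M}\,d\lambda$.

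Finally I would check the structural hypotheses for $u^{*}$. Measurability is automatic from upper semicontinuity. For local integrability of $(u^{*})^{+}$, the case $M=0$ of the transferred inequality gives $(u^{*})^{+}(x)\leq \frac{K}{\nu_{N}r^{N}}\int_{B(x,r)}u^{+}\,d\lambda<\infty$, so $u^{*}$ is finite everywhere; being upper semicontinuous and $[-\infty,+\infty)$-valued, $u^{*}$ is bounded above on each compact subset, whence $(u^{*})^{+}$ is locally bounded and thus locally integrable. The main obstacle I anticipate is the moving-ball continuity step, where one must simultaneously keep the balls $B(y,r)$ relatively compact and exhibit a single integrable dominating function; by contrast the truncation identity is routine once the commutation of $\limsup$ with the monotone continuous map $\phi$ is isolated.
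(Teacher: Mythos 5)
Your proof is correct and follows essentially the same route as the paper: the truncation identity $(u_{M})^{*}=(u^{*})_{M}$, followed by transfer of the quasi-mean inequality using continuity of the ball integral as a function of its center, and finally $u_{M}\leq (u_{M})^{*}$ to enlarge the integrand. The differences are cosmetic ordering (you take the $\limsup$ before enlarging the integrand, the paper after, and you justify continuity by dominated convergence applied to $u_{M}$ rather than to $(u^{*})_{M}$) together with your added verification of the structural hypotheses on $u^{*}$ (measurability and local integrability of $(u^{*})^{+}$), which the paper leaves implicit.
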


All we need to show is that $(u^{*})_{M}$ satisfies the quasi mean inequality. We start by proving  that 
\begin{equation}
 (u^{*})_{M}=(u_{M})^{*}. \label{5173} 
 \end{equation}

\begin{lemma}
Let $(a_{n})$ be a convergent sequence of real numbers and $k$ a constant. Then
$$\lim_{n\rightarrow+\infty}\max\lbrace a_{n},k\rbrace=\max\lbrace \lim_{n\rightarrow+\infty}a_{n},k\rbrace.$$
\end{lemma}
\begin{proof}
 Let  $f(x):=\max\lbrace x,k\rbrace$. This is a continuous function. The left side of the above equation equals $\lim_{n\rightarrow+\infty}f(a_{n})$ and the right side equals $f(\lim_{n\rightarrow+\infty}a_{n})$. They are equal by continuity of $f$.
\end{proof}

Now we can prove (\ref{5173}). Let $(r_{n})$ be a sequence of real numbers that approaches 0, as $n\rightarrow+\infty$ and let $\overline{B(\zeta,r_{n})}\subset D$. It is easy to check that
\begin{align*}
\sup_{x}\left\lbrace \max\lbrace u(x),-M\rbrace+M\right\rbrace&= \sup_{x}\left\lbrace \max\lbrace u(x),-M\rbrace\right\rbrace+M\\
&=\max\lbrace\sup_{x} u(x),-M\rbrace+M,
\end{align*}
 where  the suprema are taken over the ball $B(\zeta,r_{n})$. By letting $n\rightarrow+\infty$, the left side of the first equation approaches $(u_{M})^{*}(\zeta)$, and the last expression is equal, according to Lemma 2.3, to 
$$\max\lbrace\lim_{n\rightarrow+\infty}\sup_{x}u(x),-M\rbrace+M,$$ 
which is $(u^{*})_{M}(\zeta)$. This proves (\ref{5173}).

\begin{proof}[Proof of  Theorem 2.1] Take $\overline{B(x,r)}\subset D.$ We have 
\begin{align*}
u_{M}(x)&\leq \frac{K}{\nu_{N}r^{N}}\int_{B(x,r)}u_{M}(\xi)d\lambda(\xi)\\
&\leq\frac{K}{\nu_{N}r^{N}}\int_{B(x,r)}(u_{M})^{*}(\xi)d\lambda(\xi)\\
&=\frac{K}{\nu_{N}r^{N}}\int_{B(x,r)}(u^{*})_{M}(\xi)d\lambda(\xi),\\
\end{align*}
by (\ref{5173}). We notice that the last integral is a continuous function of $x$, since the integrand is integrable; it majorizes $u_{M}$ thus it also majorizes $(u_{M})^{*}$, which equals $(u^{*})_{M}$, by (\ref{5173}). We obtain
$$(u^{*})_{M}\leq\frac{K}{\nu_{N}r^{N}}\int_{B(x,r)}(u^{*})_{M}(\zeta)d\lambda(\zeta),$$ as required.
 \end{proof}

 \begin{theorem}
 Let $u$ be a $K$-quasi-nearly subharmonic function n.s.  on $D$, and  
 $$N:=\lbrace x\in D: u(x)<0\rbrace$$
 be the negative set of $ u $. If the interior of $ N $ is not empty, then $ u $ is nearly subharmonic. 
\end{theorem}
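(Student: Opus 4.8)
The plan is to show that the \emph{single} constant $K$ for which the defining inequality of $u$ holds is in fact forced to equal $1$; once $K=1$, the defining inequality of a $K$-quasi-nearly subharmonic function n.s.\ becomes exactly the mean value inequality, so $u$ is nearly subharmonic straight from the definition. The crux is a sign observation: on the set where $u$ is negative the factor $K\ge 1$ works \emph{against} us, since multiplying a negative average by $K$ only makes the right-hand side smaller. Thus, far from being harmless, the negative part of $u$ constrains $K$ from above, and a point where $u$ is genuinely negative will pin $K$ down to $1$.

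First I would dispose of the trivial case $u\equiv-\infty$ (which is nearly subharmonic by convention) and assume $u\not\equiv-\infty$. Then, by the theorem of Riihentaus quoted above, $u$ is finite almost everywhere and locally integrable on $D$. Let $G$ denote the nonempty open interior of $N$; in particular $G$ has positive Lebesgue measure and $u<0$ throughout $G$. Since $u$ is finite a.e.\ and locally integrable, almost every point of $G$ is a Lebesgue point of $u$, so I may choose $x^{*}\in G$ that is a Lebesgue point of $u$ with $-\infty<u(x^{*})<0$.

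Next I would apply the defining inequality at $x^{*}$ to the balls $B(x^{*},r)$ with $r$ small (these are relatively compact in $D$, indeed contained in $G$): $u(x^{*})\le \frac{K}{\nu_{N}r^{N}}\int_{B(x^{*},r)}u\,d\lambda$. By the Lebesgue differentiation theorem the averages on the right converge to $u(x^{*})$ as $r\to 0$, so letting $r\to 0$ gives $u(x^{*})\le K\,u(x^{*})$. Since $u(x^{*})$ is finite and strictly negative, this rearranges to $(K-1)\,u(x^{*})\ge 0$ with $u(x^{*})<0$, which forces $K\le 1$; combined with $K\ge 1$ this yields $K=1$. With $K=1$ the defining inequality reads $u(x)\le \frac{1}{\nu_{N}r^{N}}\int_{B(x,r)}u\,d\lambda$ for every ball relatively compact in $D$, which is precisely the mean value inequality, and as $u$ is measurable we conclude that $u$ is nearly subharmonic.

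The step I expect to be the main obstacle is the passage to the limit: one must secure a point at which the averages of $u$ genuinely converge to a finite \emph{negative} value of $u$, and not merely to $-\infty$ (which would give no information) or to a value differing from $u(x^{*})$. A density (Lebesgue) point of $u$ lying inside $G$ supplies exactly this, and it is here that the hypothesis $\mathrm{int}\,N\neq\emptyset$ enters, guaranteeing that $N$ has positive measure so that such points exist. If one prefers to avoid the Lebesgue differentiation theorem, one can instead observe that on $G$ the factor $K\ge 1$ already yields the ordinary mean value inequality (because the averages over balls inside $G$ are negative), so $u$ coincides a.e.\ on $G$ with a subharmonic function, whose averages decrease to its value as $r\to 0$; choosing a point of $G$ at which $u$ equals this subharmonic function and is negative then drives the same limiting argument.
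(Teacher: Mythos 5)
Your proposal is correct and takes essentially the same approach as the paper: both arguments apply the defining inequality on small balls centered at a Lebesgue point in the interior of $N$, use the Lebesgue differentiation theorem to pass to the limit $u(x^{*})\le K\,u(x^{*})$, and conclude $K=1$ so that the defining inequality collapses to the mean value inequality. If anything, your version is slightly more careful than the paper's, since you make explicit that the chosen point must have a \emph{finite, strictly negative} value of $u$ (guaranteed by Riihentaus's theorem) for the division by $u(x^{*})$ to be legitimate.
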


\begin{proof}
We need to prove that $ u $  satisfies the mean value inequality everywhere on $D$. Take a ball $B(x,r)$ relatively compact in $N$. We have 
$$u(x)\leq K\left( \frac{1}{\nu_{Nr^{N}}}\int_{B(x,r)}ud\lambda\right) \leq0. $$
We know that almost every $x$ in the interior of $N$ is a Lebesgue point, meaning that  the above normalized integral within parenthesis converges to $u(x)$, as $r\rightarrow0^{+}$. For such an $x$ and by letting $r\rightarrow0$ we get $u(x)\leq Ku(x)\leq0$  and thus $K\leq1$. Since by definition $K\geq1$, we obtain $K=1$. Now, by Theorem 1.6 (i) $u$ is locally integrable and satisfies the mean value inequality. Thus $u$ is nearly subharmonic.

\end{proof}

 \begin{corollary}
 Let $u$ be a $K$-quasi-nearly subharmonic function n.s.  on $D$. Then, either $u^{*}$ is subharmonic on $D$, or $u^{*}\geq0$  and  is $K$-quasi-nearly subharmoic n.s. on $D$.
\end{corollary}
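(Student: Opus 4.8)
The plan is to separate into two cases according to whether the interior of the negative set $N=\{x\in D: u(x)<0\}$ is empty, exploiting Theorem 2.4 in one case and Theorem 2.1 in the other. The first preliminary step I would carry out is to show that every $K$-quasi-nearly subharmonic function n.s. is automatically $K$-quasi-nearly subharmonic in the sense of Definition 1.4; this is what lets me feed $u$ into Theorem 2.1 and conclude that $u^{*}$ is $K$-quasi-nearly subharmonic. To prove this reduction I would fix $M\ge 0$ and a ball $\overline{B(x,r)}\subset D$ and verify the truncated inequality $u_{M}(x)\le \frac{K}{\nu_{N}r^{N}}\int_{B(x,r)} u_{M}\, d\lambda$ by splitting on the sign of $u(x)+M$: when $u(x)\ge -M$ one has $u_{M}(x)=u(x)+M$ and $u\le u_{M}-M$ pointwise, so the n.s. inequality for $u$ together with $K\ge 1$ yields the claim; when $u(x)<-M$ the left side is $0$ while the right side is nonnegative because $u_{M}\ge 0$.

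With $u^{*}$ now known to be $K$-quasi-nearly subharmonic (general sense) via Theorem 2.1, I would distinguish the two cases. If the interior of $N$ is nonempty, Theorem 2.4 shows that $u$ is nearly subharmonic, and then Theorem 1.3 produces a subharmonic function equal to $u$ almost everywhere which is precisely $u^{*}$; hence $u^{*}$ is subharmonic, giving the first alternative. (The degenerate case $u\equiv-\infty$ falls here, with $u^{*}\equiv-\infty$ subharmonic by convention.)

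If instead the interior of $N$ is empty, then $D\setminus N=\{x: u(x)\ge 0\}$ is dense in $D$, so every ball $B(x,r)$ contains points where $u\ge 0$; consequently $\sup_{B(x,r)}u\ge 0$ for all $r$ and $u^{*}(x)=\inf_{r>0}\sup_{B(x,r)}u\ge 0$ on all of $D$, which is the nonnegativity asserted in the second alternative. To upgrade $u^{*}$ from the general sense to the narrow sense I would use that $u^{*}\ge 0$ forces $(u^{*})_{0}=u^{*}$, so the $M=0$ instance of the inequality supplied by Theorem 2.1 reads exactly $u^{*}(x)\le \frac{K}{\nu_{N}r^{N}}\int_{B(x,r)} u^{*}\, d\lambda$, which is the n.s. condition of Definition 1.5; measurability follows from upper semicontinuity and the local integrability of $(u^{*})^{+}=u^{*}$ is already part of the conclusion of Theorem 2.1. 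The main obstacle, and the step deserving the most care, is the preliminary reduction from the narrow sense to Definition 1.4, since it is what allows Theorem 2.1 to be invoked at all; once that is in hand, the density argument for $u^{*}\ge 0$ and the $M=0$ specialization back to the narrow sense are routine.
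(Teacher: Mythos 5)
Your proposal is correct, and its first half coincides with the paper: the nonempty-interior case is handled exactly as in the paper (the negative-set theorem, Theorem 2.4, gives that $u$ is nearly subharmonic, and Theorem 1.3 then makes $u^{*}$ subharmonic), and your density argument for $u^{*}\geq 0$ is the same as the paper's sequence argument. Where you genuinely diverge is in proving that $u^{*}$ satisfies the narrow-sense inequality. You first prove a reduction lemma --- every $K$-quasi-nearly subharmonic function n.s. is $K$-quasi-nearly subharmonic in the sense of Definition 1.4 --- and your two-case truncation argument for it is sound (the point being that $K\geq 1$ and $M\geq 0$ make the leftover term $M-KM\leq 0$); this lets you feed $u$ into Theorem 2.1 to get that $u^{*}$ is $K$-quasi-nearly subharmonic in the general sense, after which $u^{*}\geq 0$ gives $(u^{*})_{0}=(u^{*})^{+}=u^{*}$, so the $M=0$ truncated inequality is literally the n.s. inequality. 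The paper instead proves the n.s. inequality for $u^{*}$ directly, following Riihentaus: it takes the limit superior as $x\to\zeta$ in $u(x)\leq \frac{K}{\nu_{N}\rho^{N}}\int_{B(x,\rho)}u\,d\lambda$, using that this average is a continuous function of the center $x$ because $u$ is locally integrable (Theorem 1.6(i)), and then majorizes $u$ by $u^{*}$ under the integral sign. Your route buys economy: it recycles Theorem 2.1, isolates a genuinely useful fact (n.s. implies the general sense, with the same constant), and needs no new continuity-of-averages argument; but it leans essentially on the sign condition $u^{*}\geq 0$, so it yields the n.s. property only in the empty-interior case --- which is all the corollary requires. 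The paper's argument is sign-independent and so proves something stronger: $u^{*}$ inherits the n.s. property whenever $u\not\equiv-\infty$, the dichotomy on $N$ serving only to decide between ``subharmonic'' and ``nonnegative''. One minor caveat: declaring $u^{*}\equiv-\infty$ subharmonic ``by convention'' is nonstandard, but the paper ignores this degenerate case entirely, so this is not a gap relative to the paper's own treatment.
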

\begin{proof}
If the negative set of $ u $ has interior points, then by Theorem 2.6   $u^{*}$ is subharmonic on $D$. Next, assume that $N$ has empty interior and let us prove that 
\begin{equation}
u^{*}(x)\geq 0
\end{equation}
for all $x\in D$. If $x\in D\setminus N$ or if $ N $ is empty, there is nothing to prove. Let $x\in N$. There exits a sequence $\lbrace x_{n}\rbrace\subset D\setminus N$ converging to $x$, as $n\rightarrow+\infty$. We have
$$0\leq \limsup_{n\rightarrow+\infty}u(x_{n})\leq\limsup_{n\rightarrow+\infty}u^{*}(x_{n}) \leq u^{*}(x),$$ since $u^{*}$ is by construction upper semi-continuous.

 To prove that if $ u $ is $ K- $quasi-nearly subharmonic n.s., then so is $ u^{*} $,  we follow  J. Riihentaus \cite[pg 5-6]{Rii3} and make maybe some minor adjustments. First notice that  $u$ is locally bounded above, according to Theorem 2.1., and so  $u^{*}$ is well-defined. Next, being upper semi-continuous, it is also  measurable and integrable. Thus we just need to prove that $u^{*}$ satisfies the quasi-mean inequality everywhere. Let $B(\zeta,\rho)$ be an arbitrary ball relatively compact in $D$. There exists $0>\delta$ such that $B(\zeta,\rho+2\delta)$ is steel relatively compact in $D$. We have  
 
$$u(x)\leq \frac{K}{\nu_{N}\rho^{N}}\int_{B(x,\rho)}u(\xi)d\lambda(\xi),$$
for all $x\in B(\zeta,\delta)$. 
By taking the limit superior, we obtain
$$\limsup_{x\rightarrow\zeta}u(x)\leq K \limsup_{x\rightarrow\zeta}\left(\frac{1}{\nu_{N}\rho^{N}}\int_{B(x,\rho)}u(\xi)d\lambda(\xi) \right).$$ 
Let $\phi(x)$ designate the function defined by the integral within parenthesis. Since $u$ is integrable, the function $\phi$ is  continuous in $B(\zeta,\delta)$, according to a classic theorem of measure theory.  Thus the limit of the right side is in fact  $\phi(\zeta)$. The left side limit is the upper semi-continuous regularization of $u$. We thus get
$$u^{*}(\zeta)\leq \frac{K}{\nu_{N}\rho^{N}}\int_{B(\zeta,\rho)}u(\xi)d\lambda(\xi)\leq \frac{K}{\nu_{N}\rho^{N}}\int_{B(\zeta,\rho)}u^{*}(\xi)d\lambda(\xi).$$  

\end{proof}

 \vspace{10 mm}

\end{document}